\numberwithin{equation}{section}
\newtheorem{theorem}{Theorem}[section]
\newtheorem{proposition}[theorem]{Proposition}
\newtheorem{lemma}[theorem]{Lemma}
\newtheorem{remark}[theorem]{Remark}
\newtheorem{corollary}[theorem]{Corollary}
\newtheorem{definition}[theorem]{Definition}
\DeclarePairedDelimiterX{\bracket}[3]{#1}{#2}{#3}
\providecommand{\newoperator}[3]{\newcommand*{#1}{\mathop{#2}#3}}
\providecommand{\renewoperator}[3]{\renewcommand*{#1}{\mathop{#2}#3}}
\renewoperator{\Re}{\mathrm{Re}}{\nolimits}
\renewoperator{\Im}{\mathrm{Im}}{\nolimits}
\DeclarePairedDelimiterXPP{\nrm}[2]{}{\lVert}{\rVert}{\ensuremath{_{#1}}}{\ifblank{#2}{\:\cdot\:}{#2}}
\newcommand{\norm}[2]{\nrm*{#1}{#2}}
\newcommand{\abs}[1]{\bracket*{\lvert}{\rvert}{#1}}
\newcommand{\inner}[1]{\bracket*{\langle}{\rangle}{#1}}
\DeclarePairedDelimiterXPP\prob[1]{\mathbb{P}}{\lbrace}{\rbrace}{}{#1} % base probability
\DeclarePairedDelimiterXPP\probability[2]{\mathbb{P}_{#1}}{\lbrace}{\rbrace}{}{#2} % base probability operator with subscript
\DeclarePairedDelimiterXPP\expectation[1]{\mathbb{E}}{\lbrack}{\rbrack}{}{#1} % base expectation
\DeclarePairedDelimiterXPP\expectationdist[2]{\mathbb{E}_{#1}}{\lbrack}{\rbrack}{}{#2} % base expectation with subscript
\DeclarePairedDelimiterXPP\variance[1]{\mathrm{Var}}{\lbrack}{\rbrack}{}{#1} % base variance
\DeclarePairedDelimiterXPP\variancedist[2]{\mathrm{Var}_{#1}}{\lbrack}{\rbrack}{}{#2} % % base variance with subscript
\DeclarePairedDelimiterXPP\covariance[2]{\mathrm{Cov}}{(}{)}{}{#1,\mathopen{}#2} % base covariance
\newoperator{\supp}{\mathrm{supp}}{\nolimits}
\providecommand*{\diff}
{\@ifnextchar^{\DIfF}{\DIfF^{}}}
\def\DIfF^#1{
	\mathop{\mathrm{\mathstrut d}}
	\nolimits^{#1}\gobblespace}
\def\gobblespace{
	\futurelet\diffarg\opspace}
\def\opspace{
	\let\DiffSpace\!
	\ifx\diffarg(
	\let\DiffSpace\relax
	\else
	\ifx\diffarg[
	\let\DiffSpace\relax
	\else
	\ifx\diffarg\{
	\let\DiffSpace\relax
	\fi\fi\fi\DiffSpace}
\providecommand*{\pdiff}
{\@ifnextchar^{\pDIfF}{\pDIfF^{}}}
\def\pDIfF^#1{
	\mathop{\mathrm{\mathstrut \partial}}
	\nolimits^{#1}\gobblespace}
\def\gobblespace{
	\futurelet\diffarg\opspace}
\def\opspace{
	\let\DiffSpace\!
	\ifx\diffarg(
	\let\DiffSpace\relax
	\else
	\ifx\diffarg[
	\let\DiffSpace\relax
	\else
	\ifx\diffarg\{
	\let\DiffSpace\relax
	\fi\fi\fi\DiffSpace}
\DeclarePairedDelimiterX\Set[1]\{\}{
	
	#1
}
\newcommand{\Complex}{\mathbb{C}}
\newcommand{\T}{\mathbb{T}}
\newcommand{\Bcal}{\mathcal{B}}
\newcommand{\Fcal}{\mathcal{F}}
\newcommand{\Hcal}{\mathcal{H}}
\newcommand{\Mcal}{\mathcal{M}}
\newcommand{\Ucal}{\mathcal{U}}
\newcommand{\eq}{\begin{equation}}
\newcommand{\en}{\end{equation}}
\title[Douglas-Rudin Approximation]{Douglas-Rudin Approximation theorem for operator-valued functions on the unit ball of $\mathbb{C}^d$ }
\author[Kumar]{Poornendu Kumar}
\address{Kumar \\ Department of Mathematics \\ University of Manitoba\\ Winnipeg, R3T 2N2, Canada\\ {Email: poornendu.kumar@umanitoba.ca }}
\author[Rastogi]{Shubham Rastogi}
\address{Rastogi\\ Department of Mathematics \\
Indian Institute of Technology Bombay\\ Powai, Mumbai, 400076, India\\ {Email: shubhamr@math.iitb.ac.in }}
\author[Tripathi]{Raghavendra Tripathi}
\address{Tripathi\\ Department of Mathematics \\ University of Washington\\ Seattle WA 98195, USA\\ {Email: raghavt@uw.edu}}
\keywords{Unimodular functions, Inner functions, Approximation, Operator-valued functions}
\subjclass[2010]{ (Primary) 46E40, (Secondary) 32A99, 30J05}
\thanks{The first author is partially supported by a PIMS postdoctoral fellowship.}
\date{\today}
\begin{document}

\begin{abstract}

Douglas and Rudin proved that any unimodular function on the unit circle $\T$ can be uniformly approximated by quotients of inner functions. We extend this result to the operator-valued unimodular functions defined on the boundary of the open unit ball of $\mathbb{C}^d$. Our proof technique combines the spectral theorem for unitary operators with the Douglas-Rudin theorem in the scalar case to bootstrap the result to the operator-valued case. This yields a new proof and a significant generalization of Barclay's result [Proc. Lond. Math. Soc. 2009] on the approximation of matrix-valued unimodular functions on $\T$. 
\end{abstract}

\maketitle
\section{Introduction}\label{sec:Intro}

Inner functions are one of the most ubiquitous objects in the study of function theory of domains. Inner functions were introduced by R. Nevanlinna. Following the groundbreaking contributions of the Riesz brothers, Frostmann, and Beurling, they have evolved into a fundamental concept in analysis (see for examples~\cites{Gar, Hari}) and continue to inspire research even today. It is a well-known result, due to Fatou, that any bounded holomorphic function $f$ on the open unit disc $\mathbb D 
=\{z\in\mathbb{C}: |z|<1\}$, admits radial limits $f^{*}(e^{i\theta})\coloneqq \lim_{r \to 1} f(re^{i\theta})$ almost everywhere on the unit circle $\mathbb{T}$ with respect to the Lebesgue measure on $\mathbb{T}$. If the boundary function $f^{*}$ has modulus one almost everywhere on the circle with respect to the Lebesgue measure on $\mathbb{T}$, then $f$ is called an inner function. Whenever there is no scope for confusion, we identify a bounded analytic function $f$ defined on the disc with the boundary function $f^{*}$ defined on the circle via the radial limit and we use the same symbol, namely $f$, to denote both. The notion of inner functions can be naturally generalized to matrix-valued settings and operator-valued settings as well-- where the modulus one condition is replaced by taking values in the space of unitary matrices or isometries. 

In this paper, our focus lies in exploring the approximation properties of inner functions. Carathéodory~\cite{Car}, in his study of holomorphic functions on the open unit disc, demonstrated that any holomorphic self-map on $\mathbb{D}$ can be uniformly approximated by rational inner functions over compact subsets of $\mathbb{D}$. Since then this landmark result has sparked numerous generalizations across various domains, as evidenced by works~\cites{ABJK2023, Rudin, Ale, BBK2024}. On the other hand, Fisher~\cite{Fisher} showed that any function defined on the disc-- continuous up to its boundary--can be uniformly approximated through convex combinations of rational inner functions. For an insightful and comprehensive account of approximation by inner functions, we recommend the survey by Mashreghi and Ransford~\cite{mashreghi2018approximation}. Our focus is on the unimodular functions and their approximation by inner functions. In this vein, of particular interest to us is the Douglas-Rudin Theorem~\cite{RudinDouglas}, which we state below.

 %In this paper, we are interested in the approximation properties of the inner functions. There are various results in this direction, starting with Carath\'eodory who proved that any holomorphic self-map on $\mathbb{D}$ can be  approximated by rational inner functions uniformly on compact subsets of $\mathbb{D}$. This result has been generalized in various domains and from different perspectives~\cites{ABJK2023, BK, Rudin, Ale, BBK2024}. Fisher~\cite{Fisher} showed that any function on the disc which is continuous up to the boundary can be approximated by the convex combination of inner functions. For an insightful and comprehensive account of approximation by inner functions, we recommend the survey by Mashreghi and Ransford~\cite{mashreghi2018approximation}.  %proved that any holomorphic self-map on $\mathbb{D}$ can be  approximated by rational inner functions uniformly on compact subsets of $\mathbb{D}$. This result has been generalized in various domains and from different perspectives~\cites{ABJK2023, BK, Rudin, Ale, BBK2024}. Fisher~\cite{Fisher} showed that any function on the disc which is continuous up to the boundary can be approximated by the convex combination of inner functions. 

\begin{theorem}[Douglas-Rudin]\label{thm:DR}
    Let $f:\T\to \Complex$ be a unimodular function, that is, $f$ is measurable and $|f(z)|=1$ for a.e. $z\in \T$. Then, for every $\epsilon>0$ there exist inner functions (even Blaschke products) $\varphi, \psi$ on $\mathbb{D}$ such that 
    \begin{align*}
         \operatorname{ess\hspace{1mm}sup}_{z\in \T}|f(z)-\varphi(z)\psi(z)^{-1}|<\epsilon.
    \end{align*}
   
\end{theorem}
\noindent Moreover, Helson and Sarason \cite{HS} showed that if the function $f$ is continuous, then one can choose $\varphi$ and $\psi$ to be rational inner functions. This theorem has several important consequences, many of which were proved in~\cite{RudinDouglas} itself. As a far-reaching consequence of this result, Axler~\cite{Axler} proved that any arbitrary bounded measurable function on the circle can be elegantly expressed by the quotient of two nice functions: one sourced from $H^{\infty}+ C(\mathbb{T})$, and the other characterized as a Blaschke product on the unit disc.

Moving to the higher variable setting, there are two domains in $\mathbb{C}^d$ that can be considered analogous to the disc in $\mathbb{C}$, namely the open unit polydisc $\mathbb{D}^d$ and the complex Euclidean open unit ball 
$$\mathbb{B}_d=\{z\in\mathbb{C}^d: \|z\|<1\}.$$
The notion of inner functions and unimodular functions naturally extends to these domains. More precisely, let $\Omega = \mathbb{D}^d$ or $\mathbb{B}_d$ and $b\Omega$ be the distinguished boundary of $\Omega$. Then we say that a holomorphic map $f$ on $\Omega$ is inner if its radial limits are of modulus one on $b\Omega$ almost everywhere with respect to the Lebesgue measure on $b\Omega$. The inner functions in these domains are considerably more complicated~\cites{Rudin, RudinBall}. While the structure of rational inner functions and the Carath\'eodory approximation result are understood in the polydisc case, the existence of non-constant inner functions on $\mathbb{B}_{d}$ for $d\geq 2$ remained a long-standing problem until resolved by Aleksandrov~\cite{Ale}. Furthermore, he showed that the set of inner functions is dense in the uniform compact-open topology on the open unit ball, i.e., a version of Carath\'eodory's theorem holds.

Using the cohomology group theory and the structure of rational inner functions, McDonald~\cite{Mc} extended the Douglas-Rudin theorem to the polydisc. However, McDonald's result~\cite{Mc} only allows the uniform approximation of \emph{continuous} unimodular functions -- albeit by rational inner functions. The problem of approximating measurable unimodular functions on polydisc still seems to be an open problem. The case of unimodular functions on the spheres 
$$\mathbb{S}_d=\{z\in\mathbb{C}^d: \|z\|=1\}$$ 
is more delicate. Rudin~\cite{RudinPaper} proved that an unimodular function on $\mathbb{S}_d$ (for $d\geq 2$) cannot be approximated by quotients of inner functions in the uniform norm. However, he proved that any unimodular function on $\mathbb{S}_d$ can indeed be approximated by quotients of inner functions in almost everywhere sense.

Given the success of the previous results, the natural progression is to explore their generalization in the matrix-valued or operator-valued setting. For matrix-valued unimodular functions on $\mathbb{D}$, such result is known due to~\cite{BS}*{Section 3}. The proof technique in~\cite{BS}, however, has limited applicability due to its reliance on estimating certain inner functions and the finite-dimensional nature of the problem, along with the consideration of single variables. To prove such results for $\mathbb{B}_d$ or extending~\cite{BS} to the operator-valued setting, an alternative approach to their proof is needed.
In this paper, we address precisely this need by extending the Douglas-Rudin result for the operator-valued functions defined on $\mathbb{B}_d$.

Our proof involves reducing the problem to the unimodular functions which assume only two distinct values-- one of which being the identity operator. We combine the Spectral theorem and use the approximation result for scalar-valued functions to obtain our result. As the Douglas-Rudin theorem for scalar-valued functions is known for the open unit ball, our approach readily extends to this case. 

The statement of the generalized Douglas-Rudin theorem requires some definitions and notations. In the following section, we provide the necessary definitions and notations before stating our main results.

\section{Main results}
Throughout this paper, $\mathcal{H}$ denotes a separable Hilbert space over $\mathbb{C}$. We denote by $\mathcal{B}(\mathcal{H})$ the set of all bounded linear operators on $\mathcal{H}$, and $\Ucal(\Hcal)$ denotes the set of all unitary operators on $\Hcal$. A map $f:\Omega \rightarrow \mathcal{B}(\mathcal{H})$ is said to be holomorphic if its Fréchet derivative exists for all $z\in \Omega$. This definition is equivalent to the following characterization: for any pair of vectors $h_1, h_2 \in \mathcal{H}$, the scalar-valued function
\[
\Omega \ni z \mapsto \langle f(z)h_1, h_2\rangle
\]
is holomorphic. For further elucidation, refer to \cite{Game} and a comprehensive survey for operator-valued holomorphic functions \cite{ES}. The notion of unimodular functions in the operator-valued case can be generalized in the following way:

\begin{definition}
A measurable function $f:b\Omega \to \Bcal(\Hcal)$ is said to be \emph{unimodular} if $f(z)$ is unitary for almost every $z\in b\Omega$.
\end{definition}
Such functions have been greatly studied by several authors and found a host of applications in analysis. To mention a few, these functions play an important role in the study of Toeplitz operators, convolution integral operators, and singular integral operators, in particular, in the study of the Fredholm properties of such operators. We refer the reader to~\cites{CG,FB,FK} for more details.

We denote by $H^\infty(\Omega, \Bcal(\mathcal{H}))$ the set of holomorphic maps $f$ on $\Omega$ for which $$\sup_{z\in \Omega}\|f(z)\|_{\Bcal(\mathcal{H})} < \infty$$ and $L^\infty(b \Omega, \mathcal{B}(\mathcal{H}))$ denotes the set of essentially bounded measurable functions on the distinguished boundary of $\Omega$.
See~\cites{NF, Hoff} for more details on operator-valued holomorphic functions.
\begin{definition}
 A function $\varphi\in H^\infty(\Omega, \Bcal(\mathcal{H}))$ is said to be \emph{two-sided inner} if
$$\varphi(z)^*\varphi(z)=I_{\Hcal}= \varphi(z)\varphi(z)^* \quad \text{ for almost every z on } b\Omega.$$
\end{definition}
If the first equality holds, then $\varphi$ is said to be \emph{inner}. Note that if $\mathcal{H}$ is finite-dimensional, then the notion of inner functions and two-sided inner functions coincide. In this case, the structure of {\em rational} inner functions is well known. In a seminal work by Potapov (\cite{Potapov}), he proved that any $N \times N$ matrix-valued rational inner function $\Phi$ can be represented as:

\begin{align}\label{Matrix-Inner}
  \Phi(z) = U \left( \prod_{m=1}^{M} \left( b_{\alpha_m}(z)P_m + (I_{\mathbb{C}^N} - P_m) \right) \right)  
\end{align}
where $z \in \mathbb{D}$, $m$ is a natural number, $U$ is an $N \times N$ unitary matrix, $P_m$ are projections onto certain subspaces of $\mathbb{C}^N$, $\alpha_m$ are points in the open unit disc, and $$b_{\alpha}(z) = \frac{z - \alpha}{1 - \bar{\alpha}z}$$ for $\alpha \in \mathbb{D}$ represents a Blaschke factor. These functions are commonly referred to as Blaschke-Potapov products. Each Blaschke-Potapov factor can also be written as 
$$U_1\begin{bmatrix}
   b_{\alpha_m}(z)I_{r} & 0 \\
   0 & I_{N-r}
\end{bmatrix} U_2$$
for some unitaries $U_1$ and $U_2$. Recently, Curto, Hwang, and Lee~\cites{curto2022operator, Curto2} have extensively studied two-sided inner functions along with establishing some relations with Hankel operators. They generalized the Potapov result for operator-valued rational two-sided inner functions; see~\cite{curto2022operator} for more details on this.

Recall that a sequence of operators $T_n$ on $\mathcal{H}$ converges to $T$ on $\mathcal{H}$ in the \emph{weak-sense} if for any $h, k \in \mathcal{H}$, 
\[
\langle (T_n - T)h, k\rangle_{\mathcal{H}} \rightarrow 0\;,
\]
as $n\to \infty$. Let $f_n:b\Omega\to \mathcal{B}(\mathcal{H})$ be a sequence of bounded measurable functions. We say that $f_n$ converges, \emph{uniformly on $b\Omega$ in the $weak$ topology}, to $f:b\Omega\to \mathcal{B}(\mathcal{H})$ if for every $h, k\in \Hcal,$ we have
\[
\operatorname{ess\,sup}_{z\in b\Omega}\abs{\langle (f_n(z) - f(z))h, k\rangle} \rightarrow 0.
\]

We now state our first main result that generalizes Douglas-Rudin Theorem~\ref{thm:DR} to the operator-valued functions (in the single-variable case).

\begingroup
\setcounter{theorem}{0}
\renewcommand\thetheorem{\Alph{theorem}}
\begin{theorem}\label{Thm:Main_Theorem}
Any unimodular function $f:\T\to \Bcal(\Hcal)$ can be approximated uniformly on $\T$, in weak topology, by the quotients of two-sided inner functions. Furthermore, if $f$ is continuous (in norm topology), the uniform approximation holds in the operator norm.
\end{theorem}
\endgroup
Note that when $\mathcal{H}$ is finite-dimensional, the weak topology coincides with the operator norm topology. Thus, the result~\ref{Thm:Main_Theorem} can be strengthened. That is, we achieve uniform approximation in the norm topology--without requiring any continuity assumption. Thus, we recover the following result due to Barclay~\cite{BS}*{Section 3} that we record below as a corollary. 
\begin{corollary}
\label{Cor:Bar}
    Let $f:\T\to\Mcal_n(\mathbb{C})$ be a unimodular function. Then, for every $\epsilon>0$, there exists matrix-valued inner functions $\Phi$ and $\Psi$ such that 
    \[\operatorname{ess\hspace{1mm}sup}_{z\in \T}\|f(z) - \Phi(z)\Psi(z)^{*}\|_{op}< \epsilon.\]
\end{corollary}

While the proof in~\cite{BS} relies on estimating specific inner functions and leveraging the structure of certain functions, our proof differs significantly. Our proof technique relies on the spectral theorem for unitary operators along with the result in the scalar-valued case. This makes our proof technique very versatile. In particular, as a version of the Douglas-Rudin theorem is known for the unimodular functions $f: \mathbb{S}_{d} \rightarrow \mathbb{C}$ (see ~\cite{RudinBall}), our proof technique enables us to extend this result to the operator-valued unimodular functions on \( \mathbb{S}_d \) for all $d\geq 1$. We must emphasize that since the approximation of scalar-valued unimodular functions on \( \mathbb{B}_d \), where \( d \geq 2 \), holds only in an almost everywhere (a.e.) sense (see~\cite{RudinBall}*{Section 6}), the Douglas-Rudin type approximation result for operator-valued unimodular functions also holds only in the almost everywhere (a.e.) sense for \( d \geq 2 \).

Before we state our result in this setting, we need another definition. We say that $f_n:b\Omega\to \mathcal{B}(\Hcal)$ converges, \emph{in the $weak$ topology}, to $f:b\Omega\to \mathcal{B}(\mathcal{H})$ a.e., if for every $h, k\in \Hcal,$ we have
\[
\langle (f_n(z) - f(z))h, k\rangle \rightarrow 0, \quad \text{for a.e. } \quad z\in b\Omega\;.
\]

\begingroup
\setcounter{theorem}{1}
\renewcommand\thetheorem{\Alph{theorem}}
\begin{theorem}\label{Thm:MainTheorem_Ball2}
Let $f:\mathbb{S}_d\to \Bcal(\Hcal)$ be a unimodular function. Then, $f$ can be approximated in the weak topology by quotients of two-sided inner functions a.e. Additionally, if $f$ is continuous, the above approximation holds in operator norm a.e.
\end{theorem}
\endgroup

Following~\cite{RudinBall}, it is clear that our result is optimal in this case. That is, without additional assumptions on $f$, the above result can not be strengthened to uniform convergence.

\subsection{Outline of the paper}
In Section~\ref{sec:proofs}, we first collect some preliminary lemmas that allow us to reduce the proof of Theorem~\ref{Thm:Main_Theorem} and Theorem~\ref{Thm:MainTheorem_Ball2} to approximating unimodular functions taking only two values. We give the proof of our main results in Section~\ref{subsec:Mainproof} and Section~\ref{sec:Thm2}. We end with some remarks and applications of our results in Section~\ref{sec:conclusion}.

\section{Preliminaries}\label{sec:proofs}
The proof of Theorem~\ref{Thm:Main_Theorem} and Theorem~\ref{Thm:MainTheorem_Ball2} require some common reductions. In this section, we state and prove some preliminary lemmas for this task. Lemma~\ref{lem:ApproximationBySimpleUnimod} is essentially a topological result. This reduces our problem to approximating an unimodular function taking only finitely many values. In Lemma~\ref{lem:BinaryApprox}, we write an unimodular function taking only finitely many values as a finite product of unimodular functions taking only two values. This reduces our problem to approximating unimodular functions taking only two values. Lemma~\ref{lem:ApproximationBySimpleUnimod} requires the range of our unimodular function to be precompact. Therefore, for an arbitrary unimodular function, we get the approximation in the weak topology as $\Ucal(\Hcal)$ is precompact in the weak topology. However, if the range of the function is compact in a stronger topology--for instance if $f$ is continuous with respect to operator norm-- then Lemma~\ref{lem:ApproximationBySimpleUnimod} gives approximation in the stronger topology. 
%\begin{lemma}
%\label{lem:PreTopology}
%   Let $X$ be a metric space. Let $K\subseteq X$ be a subset such that the closure of $\overline{K}$ is compact. Then, for every $\epsilon>0$, there exists a finite set $\Fcal\subseteq K$ such that $K\subseteq \cup_{x\in \Fcal}^{m}B(x, \epsilon)$.
%\end{lemma}
%\begin{proof}
%    Note that $\{B(x, \epsilon):x\in K\}$ is a cover for $\overline{K}$. Since $\overline{K}$ is compact, it admits a finite refinement which yields the desired result. 
%\end{proof}
%Lemma~\ref{lem:PreTopology} yields the following result. 
\begin{lemma}
\label{lem:ApproximationBySimpleUnimod}
 %  Let $X$ be a metric space and let $f:X\to \mathcal{B}{(\Hcal)}$ be a Borel measurable map such that $f(X)\subseteq K$ for some compact subset $K\subseteq \mathcal{B}{(\Hcal)}$. Then, for every $\epsilon>0,$ there exists a simple function $g:X\to K$ such that $\sup_{x\in X} d_{Y}(g(x), f(x))< \epsilon$.
 Let $X, Y$ be two metric spaces. Let $f: X \to Y$ be a Borel measurable map such that $\overline{f(X)}$ is compact. Then, for every $\epsilon>0$, there exists a function $g:X\to f(X)$ such that $g$ takes only finitely many values and $\sup_{x\in X}d_{Y}(f(x), g(x))<\epsilon$.
\end{lemma}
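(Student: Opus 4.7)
My plan is a standard total-boundedness argument, with a small twist to ensure the approximant takes values in $f(X)$ rather than in $\overline{f(X)}$.

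Since $\overline{f(X)}$ is compact in the metric space $Y$, it is totally bounded. So I would first fix a finite $(\epsilon/3)$-net $\{z_1,\dots,z_n\}\subseteq \overline{f(X)}$ in the sense that the open balls $B(z_i,\epsilon/3)$ cover $\overline{f(X)}$. Because each $z_i$ lies in $\overline{f(X)}$, I can choose for each $i$ a point $y_i\in f(X)$ with $d_Y(z_i,y_i)<\epsilon/3$; if $z_i\in f(X)$ already, just take $y_i=z_i$. This is the place where the strengthening from $\overline{f(X)}$ to $f(X)$ enters, and the use of a $\epsilon/3$-net (rather than a $\epsilon/2$-net) gives the slack required.

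Next, I would turn the cover into a Borel partition of $X$. Set
\[
A_i \;=\; f^{-1}\bigl(B(z_i,\epsilon/3)\bigr)\setminus \bigcup_{j<i} A_j \qquad (i=1,\dots,n).
\]
Each $A_i$ is Borel since $f$ is Borel measurable, and by construction $\{A_1,\dots,A_n\}$ is a disjoint family whose union is $\bigcup_i f^{-1}(B(z_i,\epsilon/3))=X$ (the last equality holds because every $f(x)$ lies in $\overline{f(X)}$, hence in some $B(z_i,\epsilon/3)$). Now define
\[
g(x) \;=\; y_i \quad \text{whenever } x\in A_i.
\]
Then $g:X\to f(X)$ takes at most $n$ values, and for any $x\in A_i$ the triangle inequality gives
\[
d_Y\bigl(f(x),g(x)\bigr)\;\le\; d_Y(f(x),z_i)+d_Y(z_i,y_i)\;<\;\tfrac{\epsilon}{3}+\tfrac{\epsilon}{3}\;=\;\tfrac{2\epsilon}{3},
\]
so $\sup_{x\in X}d_Y(f(x),g(x))\le 2\epsilon/3<\epsilon$, as required.

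There is no real obstacle here: the only subtlety is the distinction between $f(X)$ and $\overline{f(X)}$, which is handled by replacing the net points $z_i\in\overline{f(X)}$ by nearby points $y_i\in f(X)$ and budgeting an extra $\epsilon/3$. Borel measurability of $f$ is used only to make the partition sets $A_i$ measurable; since the statement does not demand measurability of $g$, even this is cosmetic, though it comes for free and will presumably be useful in later applications of the lemma.
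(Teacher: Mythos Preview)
Your proof is correct and follows essentially the same approach as the paper: cover $\overline{f(X)}$ by finitely many small balls, disjointify, and assign constant values on the preimages. The only difference is cosmetic---the paper covers $\overline{f(X)}$ directly by balls $B(f(x),\epsilon)$ indexed by $x\in X$, so the finite subcover already has centers $f(x_1),\dots,f(x_m)\in f(X)$ and your extra $\epsilon/3$-adjustment step is unnecessary.
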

%\end{lemma}
\begin{proof}
Let $B(y, r)$ denote the open ball of radius $r$ centered at $y\in Y$. Note that $\overline{f(X)}\subseteq \cup_{x\in X}B(f(x), \epsilon)$, that is, $\{B(f(x), \epsilon): x\in X\}$ is an open cover of $\overline{f(X)}$. Since $\overline{f(X)}$ is compact, we obtain a finite refinement, that is, there exists a finite collection of points $\{x_1, \ldots, x_m\}\subseteq X$ such that $$f(X)\subseteq \bigcup_{i=1}^{m}B(f(x_i), \epsilon).$$

Define $E_1:=B(f(x_1), \epsilon)$ and for $2\leq j\leq m$ iteratively define $E_j:=B(f(x_j), \epsilon)\setminus \cup_{i<j} E_i$. Again define $g:X\to Y$ by setting $$g\vert_{f^{-1}(E_i)}=f(x_i).$$ Note that $$\sup_{x\in X}d_Y(f(x), g(x))\leq \max_{j}diam(E_j).$$ The proof is complete by observing that $\max_{j}diam(E_j)<2\epsilon$ by construction. 
%Let $\Ocal\coloneqq \{B_{i}:=B(\alpha_i, \epsilon/2):i\in [m]\}$ be a finite $\epsilon/2$-covering of $K$ by the balls of radius $\epsilon/2$ centered at $\alpha_i\in K$. Define $E_1:=B_1$ and for $2\leq j\leq m$ iteratively define $E_j:=B_j\setminus \cup_{i<j} E_i$. Notice that $\max_{j}diam(E_j)<\epsilon$ and $\Ucal_n=\cup_{j=1}^{m} E_j$. We thus obtain a finite partition of $K$ by measurable subsets $\{E_i:i\in [m]\}$ of diameter at most $\epsilon$. Fix $x_i\in E_i$ arbitrarily and define $g:X\to K$ by setting $g|_{f^{-1}(E_i)}\equiv x_i.$
\end{proof}

Following is an immediate corollary of Lemma~\ref{lem:ApproximationBySimpleUnimod} that we use later in the proof. Recall that $b\Omega$ denotes the distinguished boundary of $\Omega$. 
\begin{lemma}\label{lem:unitofinite}
Let $f:b\Omega\to \Bcal(\Hcal)$ be a unimodular function. Let $d$ be a metric on the closed unit ball of  $\Bcal(\Hcal)$ that metrizes the weak topology. Then, for every $\epsilon>0$, there exists a unimodular function $g:b\Omega\to \Bcal(\Hcal)$ taking only finitely many values such that 
    \begin{align}
    \label{eq:Approx}
        \sup_{z\in b\Omega} d(f(z), g(z))<\epsilon\;.
    \end{align}

    Furthermore, if $f:b\Omega\to \Bcal(\Hcal)$ is continuous in operator-norm topology then we can replace $d(f(z), g(z))$ in equation~\eqref{eq:Approx} by $\|f(z)-g(z)\|_{op}$. 
\end{lemma}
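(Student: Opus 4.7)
The plan is to apply Lemma~\ref{lem:ApproximationBySimpleUnimod} with $X = b\Omega$ (which is compact) and with an appropriate metric target space, checking in each case that the relevant closure of $f(b\Omega)$ is compact and that the resulting $g$ remains unimodular.

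For the weak-topology case, I would take $Y$ to be the closed unit ball of $\Bcal(\Hcal)$ equipped with the metric $d$ (assuming $\Hcal$ separable so that the weak operator topology is metrizable on the unit ball). Since $f$ is unimodular, each $f(z)$ is a unitary and hence lies in the closed unit ball. By Banach--Alaoglu, the closed unit ball is compact in the weak operator topology, so $\overline{f(b\Omega)}$ (closure taken in this weak topology) is compact. Lemma~\ref{lem:ApproximationBySimpleUnimod} then produces a function $g: b\Omega \to f(b\Omega)$ taking only finitely many values with
\[
\sup_{z\in b\Omega} d(f(z), g(z)) < \epsilon.
\]
Crucially, because the lemma guarantees $g(b\Omega) \subseteq f(b\Omega)$, every value of $g$ is again a unitary, so $g$ is unimodular. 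This handles the first assertion.

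For the operator-norm case I would instead take $Y = \Bcal(\Hcal)$ equipped with the operator-norm metric. If $f$ is continuous in the operator-norm topology, then $f(b\Omega)$ is the continuous image of the compact set $b\Omega$, hence compact in operator norm, so $\overline{f(b\Omega)} = f(b\Omega)$ is compact in $Y$. Applying Lemma~\ref{lem:ApproximationBySimpleUnimod} verbatim gives a finitely-valued $g: b\Omega \to f(b\Omega)$ with $\sup_z \|f(z)-g(z)\|_{\mathrm{op}} < \epsilon$, and again $g$ is unimodular since its values lie in $f(b\Omega)$.

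The only real issue to keep track of, as opposed to a genuine obstacle, is the compactness of $\overline{f(b\Omega)}$ in the chosen topology; this reduces in the weak case to Banach--Alaoglu plus metrizability of bounded sets in WOT (which requires separability of $\Hcal$, implicit in the setup), and in the norm case to continuity of $f$ together with compactness of $b\Omega$. Everything else is a direct invocation of the previous lemma, with the observation that restricting $g$ to take values in $f(b\Omega)$ automatically preserves unimodularity.
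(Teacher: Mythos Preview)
Your proposal is correct and is exactly the argument the paper has in mind: the paper states this lemma as an ``immediate corollary'' of Lemma~\ref{lem:ApproximationBySimpleUnimod} without giving a separate proof, and your write-up simply spells out that deduction---using Banach--Alaoglu for compactness in the weak case, continuity of $f$ together with compactness of $b\Omega$ in the norm case, and the observation that $g$ takes values in $f(b\Omega)$ so remains unimodular.
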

\begin{remark}
 Lemma~\ref{lem:ApproximationBySimpleUnimod} cannot hold in general when the range is non-compact (at least not without some continuity assumption on $f$). To see this, in operator valued case, let $\theta_0=0$ and $\theta_{n}=\sum_{i=1}^{n}2^{-i}$ and let $E_i\subseteq \T$ be such that $$E_i=\{e^{2i\pi\theta}\in \T: \theta_{n-1}\theta\in \theta_{n}\}.$$ Let $\Hcal$ be a separable infinite dimensional Hilbert space and fix a countable basis $\{e_{i}:i\in \mathbb{N}\}$ and define an operator $T_i$ as follows. For any $f\in \Hcal$, define $$T_if = f-2\inner{f, e_i}.$$ Note that $T_i$ is unitary operator and $\|T_i-T_j\|=1$ for $i\neq j$. Now define $f:\T\to \Ucal(\Hcal)$ as $f(z)=T_i$ for $z\in E_i$. Note that $f$ cannot be approximated by a simple function in operator norm (or even strong topology for that matter).    
\end{remark}

For our discussion in this paper, we will refer to the unimodular functions taking only finitely many values as \emph{simple unimodular functions}. Our next lemma shows that simple unimodular functions can be written as a product of unimodular functions taking only two values.
\begin{lemma}
\label{lem:BinaryApprox}
Let $f:b\Omega\to \Bcal(\Hcal)$ be a simple unimodular function. Then, there exists a finite collection of unimodular functions $\varphi_{i}:b\Omega\to \Ucal(\Hcal), i\in \Fcal$ with $|\Fcal|<\infty$ such that each $\varphi_i$ takes at most two values and
\[f(z)=\prod_{i\in \Fcal} \varphi_i(z),\]
for every $z\in b\Omega.$
\end{lemma}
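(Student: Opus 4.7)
The plan is a direct construction that indexes the factors over the finite range of $f$. Because $f$ is simple, its range is a finite set $\{U_1, \ldots, U_n\} \subseteq \Ucal(\Hcal)$, and the level sets $A_k := f^{-1}(U_k)$ form a (Borel) partition of $b\Omega$. For each $k \in \{1,\ldots,n\}$ I would set
\[
\varphi_k(z) \;=\; \begin{cases} U_k & \text{if } z \in A_k, \\ I & \text{if } z \in b\Omega \setminus A_k, \end{cases}
\]
so each $\varphi_k$ takes only the two unitary values $U_k$ and $I$, and therefore qualifies as a unimodular function $b\Omega \to \Ucal(\Hcal)$ taking at most two values.

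To verify the factorization, I would fix an arbitrary $z \in b\Omega$ and let $j$ be the unique index with $z \in A_j$. Then $\varphi_j(z) = U_j$ while $\varphi_k(z) = I$ for every $k \neq j$, so
\[
\prod_{k=1}^{n} \varphi_k(z) \;=\; U_j \;=\; f(z),
\]
independent of the order in which the product is taken. Choosing $\Fcal = \{1, \ldots, n\}$ with any ordering then completes the argument, and $|\Fcal| = n < \infty$ is guaranteed by the hypothesis that $f$ is simple.

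The one point that might initially look like an obstacle is non-commutativity of $\Bcal(\Hcal)$: if two distinct $\varphi_k$'s could both differ from $I$ at a common point $z$, then the value of the product would depend on the ordering of $\Fcal$. The construction above avoids this difficulty by design, since at each $z$ exactly one factor equals $U_j$ and all others equal $I$. Beyond this observation there is essentially no technical content to the lemma, so I do not anticipate any serious obstacle; the result is a clean consequence of partitioning $b\Omega$ by the level sets of $f$.
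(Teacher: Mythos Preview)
Your argument is correct and is essentially identical to the paper's own proof: you partition $b\Omega$ by the level sets of $f$ and define each $\varphi_k$ to equal $U_k$ on its level set and $I$ elsewhere, exactly as the paper does. Your extra remark about non-commutativity being harmless (since at each point all but one factor is the identity) is a nice clarification that the paper leaves implicit.
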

\begin{proof}
    The proof is standard but we include it for completeness. Since $f$ takes only finitely many values, there exists a partition of $b\Omega$ by finitely many measurable subsets $E_1, \ldots, E_m$ such that $f\vert_{E_i}\equiv U_i\in \Ucal(\Hcal)$. For $i\in \{1, \ldots, m\}$, define unimodular functions $\varphi_i:b\Omega\to \Ucal(\Hcal)$ such that 
   \begin{equation*}
		\varphi_i=
		\begin{cases}
			U_i & \text{on }\   E_i; \\
			I & \text{ on }\   E_i^c
		\end{cases}
	\end{equation*}
 It is clear that $f(z)=\prod_{i=1}^{m}\varphi_i(z)$ for all $z\in b\Omega.$
\end{proof}
Given the above two lemmas, it suffices to prove our theorems for unimodular functions taking only two values. Without loss of generality, we can also assume that it takes two distinct values. Also note that it follows from the proof of Lemma~\ref{lem:BinaryApprox} that we can assume that the identity operator $I_{\Hcal}$ is in the range of $f$.  

To prove our results for such unimodular functions, we combine the spectral theorem for unitary operators with the approximation results for scalar-valued unimodular functions. As mentioned in the Introduction, these two cases need to be dealt with separately because the approximation of scalar-valued functions on $\mathbb{B}_d$ for $d\geq 2$ holds only in a.e. sense.

\section{Proof of Theorem \ref{Thm:Main_Theorem}} \label{subsec:Mainproof}
The next result is the most crucial step towards proving the main theorem.

\begin{proposition}
\label{prop:Key_Proposition2}
Let $E\subseteq \T$ be a measurable subset. Let $f:\T\to \Bcal(\Hcal)$ be a unimodular function such that $f\vert_{E}=I_{\Hcal}$ and $f\vert_{E^{c}}=T$ for some $T\in \Ucal(\Hcal)$. Then, for any $\epsilon>0$ there exist two-sided inner functions $\Phi, \Psi$ such that 
\[\operatorname{ess\hspace{1mm}sup}_{z\in \T}\|f(z)-\Phi(z)\Psi(z)^{-1}\|_{op}< \epsilon\;.\]
\end{proposition}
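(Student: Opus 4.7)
The plan is to use the spectral theorem to replace the unitary $T$ by one with finite spectrum, reduce the problem on each spectral subspace to the scalar Douglas--Rudin theorem, and then glue the resulting scalar inner functions into operator-valued two-sided inner functions on the disk.

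First, by the spectral theorem the unitary $T$ has a projection-valued spectral measure $E_T$ on $\T$ satisfying $T=\int_{\T}\lambda\,dE_T(\lambda)$. Given $\epsilon>0$, I would partition $\T$ into finitely many Borel sets $A_1,\ldots,A_N$ each of diameter less than $\epsilon$, choose $\lambda_j\in A_j\cap\sigma(T)$, and set $P_j:=E_T(A_j)$. Then $T_\epsilon:=\sum_{j=1}^{N}\lambda_j P_j$ is a unitary operator (since each $|\lambda_j|=1$ and the $P_j$ are mutually orthogonal projections summing to $I_{\Hcal}$) with $\|T-T_\epsilon\|_{op}<\epsilon$, so the two-valued function $f_\epsilon(z):=\mathbf{1}_{E}(z)\,I_{\Hcal}+\mathbf{1}_{E^{c}}(z)\,T_\epsilon$ is unimodular and satisfies $\|f-f_\epsilon\|_{\infty}<\epsilon$. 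Relative to the orthogonal decomposition $\Hcal=\bigoplus_{j=1}^{N}P_j\Hcal$, the function $f_\epsilon$ acts on the $j$-th summand by the \emph{scalar} two-valued unimodular function $\tilde f_j(z):=\mathbf{1}_{E}(z)+\lambda_j\,\mathbf{1}_{E^{c}}(z)$.

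Next, I would apply the scalar Douglas--Rudin theorem to each of the finitely many $\tilde f_j$ to obtain scalar inner functions $\phi_j,\psi_j$ with $|\tilde f_j(z)-\phi_j(z)/\psi_j(z)|<\epsilon$ for a.e.\ $z\in\T$, and assemble
\[
\Phi(z):=\sum_{j=1}^{N}\phi_j(z)P_j,\qquad \Psi(z):=\sum_{j=1}^{N}\psi_j(z)P_j.
\]
Because the sum is finite, $\Phi$ and $\Psi$ extend analytically to $\mathbb{D}$, and on the boundary $\Phi(z)^{*}\Phi(z)=\sum_{j}|\phi_j(z)|^{2}P_j=I_{\Hcal}$ a.e.\ and likewise for $\Psi$, so they are operator-valued two-sided inner functions. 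Using $P_jP_k=\delta_{jk}P_j$ one finds $\Phi(z)\Psi(z)^{-1}=\sum_{j}(\phi_j(z)/\psi_j(z))P_j$, hence
\[
\|f_\epsilon(z)-\Phi(z)\Psi(z)^{-1}\|_{op}=\max_{j}\bigl|\tilde f_j(z)-\phi_j(z)/\psi_j(z)\bigr|<\epsilon
\]
outside the union of the finitely many null sets from each $\tilde f_j$. Combining with $\|f-f_\epsilon\|_{\infty}<\epsilon$ and halving the working $\epsilon$ at the outset delivers the claim.

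The main obstacle is making the spectral coarse-graining of $T$ cooperate with the scalar Douglas--Rudin approximation. The operator-norm control on $T-T_\epsilon$ ensures $f$ is perturbed by at most $\epsilon$, while the crucial feature is the direct-sum structure provided by the spectral projections $P_j$: it is precisely what lets scalar inner functions on the individual subspaces glue into genuinely analytic operator-valued functions with unitary boundary values, and it reduces analyticity, the two-sided inner condition, and the final operator-norm bound all to their scalar counterparts. Without such a ``diagonal'' structure, relating $\|\Phi\Psi^{-1}-f\|_{op}$ to the individual scalar approximation errors would be considerably more delicate.
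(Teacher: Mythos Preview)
Your proposal is correct and follows essentially the same approach as the paper: reduce $T$ to a unitary with finite spectrum via the spectral theorem, apply scalar Douglas--Rudin on each spectral piece, and reassemble into diagonal operator-valued inner functions. The only cosmetic difference is that the paper uses the multiplication-operator form of the spectral theorem ($T=\Gamma^{*}M_{\eta}\Gamma$ on $L^{2}(\Sigma,\mu)$), while you use the projection-valued measure form directly on $\Hcal$; your version is arguably a bit cleaner since it avoids the auxiliary space and the conjugation by $\Gamma$.
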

\begin{proof}

Let $f:\T\to \Bcal(\Hcal)$ be a unimodular function such that
\begin{equation*}
		f=
		\begin{cases}
			I_{\Hcal} & \text{on }\   E; \\
			T & \text{ on }\   E^c
		\end{cases}
	\end{equation*}
 for some measurable set $E\subseteq \T$ and a unitary operator $T$ on $\Hcal$. By spectral theorem, there exist a semifinite measure space $(\Sigma,\mu),$  a function $\eta\in L^{\infty}(\Sigma, \mu)$ with $$|\eta(x)|=1\quad \text{ a.e. }\quad  x\in \Sigma$$ and a unitary $\Gamma\colon \Hcal\to L^2(\Sigma, \mu)$ such that $T = \Gamma^{*}M_{\eta}\Gamma$ where $M_{\eta}$ denotes the multiplication by $\eta$, see \cite{Simon}*{Chapter 5}.

Let $\epsilon>0$ be fixed. Let $\widetilde{\eta}:\Sigma\to \Complex$ be a simple unimodular function such that $\|\widetilde{\eta}-\eta\|_{\infty}< \epsilon.$ Define $\widetilde{T}\coloneqq \Gamma^{*}M_{\widetilde{\eta}}\Gamma$ where $M_{\widetilde{\eta}}$ denote the multiplication by $\widetilde{\eta}$. Then, $\|T-\widetilde{T}\|_{op}<\epsilon$. In other words, the simple unimodular function $\widetilde{f}:\T\to \Ucal(\Hcal)$ defined as $$\widetilde{f}\vert_{E}=I_{\Hcal}\quad  \text{ and } \quad\widetilde{f}\vert_{E^c}=\widetilde{T}$$ satisfies $\operatorname{ess\hspace{1mm}sup}_{z\in \T}\|f(z)-\widetilde{f}(z)\|_{op}<\epsilon$.

We now approximate $\widetilde{f}$ by a quotient of two-sided inner functions. To this end, we define two two-sided inner functions $\Phi, \Psi:\mathbb{D}\to \Bcal(L^2(\Sigma, \mu))$ and show that $g:\T\to \Ucal(\Hcal)$ defined as $$g(z)=\Gamma^{*}\Phi(z) \Psi(z)^{*}\Gamma$$ satisfies $\operatorname{ess\hspace{1mm}sup}_{z\in \T}\|g(z)-\widetilde{f}(z)\|_{op}<\epsilon$ concluding our proof.

Let $\widetilde{\eta}=\sum_{j=1}^{m}\alpha_i\chi_{\Sigma_i}$ where $\Sigma_{i}$,  for $i=1, \ldots, m$, is a partition of $\Sigma$ into measurable subsets. For any $1\leq i\leq m,$ invoke Theorem \ref{thm:DR} to get scalar-valued inner functions $\varphi_i, \psi_i:\mathbb{D}\to \overline{\mathbb{D}}$  such that $$\theta_i\coloneqq \varphi_i/\psi_i$$ satisfies $|\theta_{i}\vert_{E}-1|<\epsilon$ and $|\theta_{i}\vert_{E^c}-\alpha_i|<\epsilon$ almost everywhere. Set 
\[\varphi \coloneqq \sum_{j=1}^{m}\varphi_i\chi_{\Sigma_i}:\mathbb{D}\to L^{\infty}(\Sigma, \mu)\quad  \text{ and }\quad  \psi \coloneqq \sum_{j=1}^{m}\psi_i\chi_{\Sigma_{i}}:\mathbb{D}\to L^{\infty}(\Sigma, \mu).\]

Define $\Phi, \Psi:\mathbb{D}\to \Bcal(L^2(\Sigma, \mu))$ as 
$$\Phi(z) \coloneqq M_{\varphi(z)}\quad  \text{ and }\quad  \Psi(z) \coloneqq M_{\psi(z)}.$$ Fix $f, g\in L^2(\Sigma, \mu)$ and consider the function $F:\mathbb{D}\to \mathbb{C}$ defined as 

$$F(z)\coloneqq \inner{f, \varphi(z) g}=\sum_{j=1}^{m}\inner{f, \varphi_j(z)\chi_{\Sigma_j}g}.$$

Note that $\abs{F(z)}\leq \norm{2}{f}\norm{2}{g}$. Therefore, $\sup_{z\in \mathbb{D}}\norm{op}{\Phi(z)}\leq 1$. Again set $$F_j(z)\coloneqq \inner{f, \varphi_j(z)\chi_{\Sigma_j}g}.$$Since $\varphi_{j}$ is holomorphic, it follows that $F_j$ is holomorphic for each $j=1,\ldots, m$. Thus, we conclude that $\Phi, \Psi\in H^{\infty}(\mathbb D, \Bcal(L^2(\Sigma, \mu)))$.
\iffalse
To see this note that 
\begin{align*}
    \lim\limits_{w\to 0}\left|\frac{F_j(z+w)-F_j(z)}{w} - \inner{f, \varphi_j'(z)\chi_{\Omega_j}g}\right|&\leq \norm{2}{f}\norm{2}{g}\lim\limits_{w\to 0}\left|\frac{\varphi_j(z+w)-\varphi_j(z)}{w} - \varphi_j'(z)\right| = 0\;.
\end{align*}
\fi
Since $\varphi_i$'s are inner, for a.e. $z\in \T,$ we have $$|\varphi(z)(x)|=1\quad  \text {for all } x\in \Sigma.$$Therefore $\Phi(z)\in \Ucal(\Hcal)$ for a.e. $z\in \T$. This completes the proof that $\Phi$ is a two-sided inner function. Similarly, we can show that  $\Psi$ is a two-sided inner function. It is now easily verified that $$\operatorname{ess\hspace{1mm}sup}_{z\in \T}\|g(z)-\widetilde{f}(z)\|<\epsilon$$ where $g(z)=\Gamma^{*}\Phi(z)\Psi(z)^{*}\Gamma.$
\end{proof}

\begin{proof}[Proof of Theorem~\ref{Thm:Main_Theorem}] The proof follows from Lemma~\ref{lem:unitofinite}, Lemma~\ref{lem:BinaryApprox} and Proposition \ref{prop:Key_Proposition2}. 
\end{proof} 

\begin{remark}[A comment about the proof of Corollary~\ref{Cor:Bar}]
Corollary~\ref{Cor:Bar} follows simply because the weak topology on $\Mcal_n(\mathbb{C})$ is the same as the operator norm topology. However, a direct proof following our technique in this case would also be instructive. For brevity, we present a brief sketch. Using Lemma~\ref{lem:ApproximationBySimpleUnimod} and~\ref{lem:BinaryApprox}, our task boils down to approximating to unimodular functions taking only two values--the identity matrix, $I$ and some unitary matrix $T$. We know that a unitary matrix can be written as $U^*DU$ where $U$ is unitary and $D$ is a diagonal matrix with entries $\lambda_j\in \mathbb{T}$. Apply the Douglas-Rudin theorem \ref{thm:DR} to obtain inner functions $\varphi_i, \psi_i$ such that the quotient approximates the unimodular functions taking values $1$ and $\lambda_i$. Let $D_{\varphi}, D_{\psi}$ be diagonal matrices whose diagonal entries are $\varphi_i, \psi_i$, respectively. Define the matrix-valued inner functions $$\Phi = U^*D_{\varphi}U\quad \text{ and } \quad \Psi = U^*D_{\psi}U,$$ then $\Phi\Psi^{-1}$ approximates our unimodular function.  
 \end{remark}

\section{Proof of Theorem~\ref{Thm:MainTheorem_Ball2}}
\label{sec:Thm2}
Just like Proposition~\ref{prop:Key_Proposition2}, the proof of Theorem~\ref{Thm:MainTheorem_Ball2} also follows from a similar proposition that we state and prove below. The only difference here is that the approximation now holds only a.e. This restriction comes from the fact that for $d\geq 2$, scalar unimodular functions can only be approximated by the quotients of inner functions in a.e. sense~\cite{RudinBall}. The idea of the proof is essentially the same as that of Proposition~\ref{prop:Key_Proposition2}.

\begin{proposition}
%\label{prop:Key_Proposition2}
Let $E\subseteq \mathbb{S}_d$ be a measurable subset. Let $f:\mathbb{S}_d\to \Bcal(\Hcal)$ be a unimodular function such that
\begin{equation*}
		f=
		\begin{cases}
			I_{\Hcal} & \text{on }\   E; \\
			T & \text{ on }\   E^c
		\end{cases}
	\end{equation*}
 for some $T\in \Ucal(\Hcal)$. Then, there exists a sequence of two-sided inner functions $\Phi_k, \Psi_k$ such that 
\[\|f(z)-\Phi_k(z)\Psi_k(z)^{-1}\|_{op}\to 0\quad \text{for a.e. } z\in \mathbb{S}_d\;.\]
\end{proposition}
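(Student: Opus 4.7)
The plan is to mirror the proof of Proposition~\ref{prop:Key_Proposition2} almost verbatim, replacing the scalar Douglas--Rudin theorem on $\T$ by its $\mathbb{S}_d$ analog (which produces only a.e.\ approximations), and then extracting a diagonal sequence to obtain the a.e.\ convergence stated here.

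First, I would invoke the spectral theorem to write $T=\Gamma^{*}M_{\eta}\Gamma$, where $\eta\in L^{\infty}(\Sigma,\mu)$ is unimodular a.e.\ and $\Gamma:\Hcal\to L^{2}(\Sigma,\mu)$ is unitary. Fix $\epsilon>0$; choose a simple unimodular $\widetilde{\eta}=\sum_{i=1}^{m}\alpha_{i}\chi_{\Sigma_{i}}$ with $\|\widetilde{\eta}-\eta\|_{\infty}<\epsilon$, and let $\widetilde{T}=\Gamma^{*}M_{\widetilde{\eta}}\Gamma$ and $\widetilde{f}=I_{\Hcal}\chi_{E}+\widetilde{T}\chi_{E^{c}}$, so that $\operatorname{ess\hspace{1mm}sup}_{z}\|f(z)-\widetilde{f}(z)\|_{op}<\epsilon$.

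Next, for each $i\in\{1,\ldots,m\}$ I would apply the scalar Douglas--Rudin approximation on the ball (\cite{RudinBall}) to the scalar unimodular function $u_{i}:\mathbb{S}_{d}\to\T$ given by $u_{i}=1$ on $E$ and $u_{i}=\alpha_{i}$ on $E^{c}$, producing scalar inner functions $\varphi_{i},\psi_{i}:\mathbb{B}_{d}\to\overline{\mathbb{D}}$ with
\[
|\varphi_{i}(z)/\psi_{i}(z)-u_{i}(z)|<\epsilon\quad\text{for a.e. }z\in\mathbb{S}_{d},
\]
say outside a null set $N_{i}\subseteq\mathbb{S}_{d}$. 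Set $\varphi(z)=\sum_{i}\varphi_{i}(z)\chi_{\Sigma_{i}}$, $\psi(z)=\sum_{i}\psi_{i}(z)\chi_{\Sigma_{i}}$, and define $\Phi(z)\coloneqq M_{\varphi(z)}$ and $\Psi(z)\coloneqq M_{\psi(z)}$ on $L^{2}(\Sigma,\mu)$. Exactly as in Proposition~\ref{prop:Key_Proposition2}, the weak entries $z\mapsto\inner{f,\Phi(z)g}$ are finite sums of holomorphic scalar functions and bounded by $\|f\|_{2}\|g\|_{2}$, so $\Phi,\Psi\in H^{\infty}(\mathbb{B}_{d},\Bcal(L^{2}(\Sigma,\mu)))$; on $\mathbb{S}_{d}$, for a.e.\ $z$ each $|\varphi_{i}(z)|=|\psi_{i}(z)|=1$, so $|\varphi(z)(x)|=|\psi(z)(x)|=1$ for a.e.\ $x\in\Sigma$, hence $\Phi(z),\Psi(z)$ are unitary a.e. This verifies that $\Phi,\Psi$ are two-sided inner on $\mathbb{B}_{d}$.

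Because multiplication operators on $L^{\infty}(\Sigma,\mu)$ have operator norm equal to the essential sup, and because $\widetilde{\eta}$ takes only finitely many values, outside the null set $N=\bigcup_{i}N_{i}$ I would estimate, for $z\in E\setminus N$,
\[
\|\Phi(z)\Psi(z)^{-1}-I\|_{op}=\max_{i}|\varphi_{i}(z)/\psi_{i}(z)-1|<\epsilon,
\]
and similarly $\|\Phi(z)\Psi(z)^{-1}-M_{\widetilde{\eta}}\|_{op}<\epsilon$ for $z\in E^{c}\setminus N$. Conjugating by $\Gamma$ and combining with the $\widetilde{f}$--$f$ bound gives $\|f(z)-\Gamma^{*}\Phi(z)\Psi(z)^{*}\Gamma\|_{op}<2\epsilon$ for a.e.\ $z\in\mathbb{S}_{d}$. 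Applying this with $\epsilon_{k}=1/k$ produces a sequence $(\Phi_{k},\Psi_{k})$ of two-sided inner functions with $\|f-\Phi_{k}\Psi_{k}^{-1}\|_{op}<2/k$ on $\mathbb{S}_{d}\setminus N_{k}$ for a null set $N_{k}$; on the complement of the countable union $\bigcup_{k}N_{k}$ (still null), the asserted a.e.\ convergence follows.

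The only genuine obstacle, compared with Proposition~\ref{prop:Key_Proposition2}, is bookkeeping of the exceptional null sets: since the scalar ball version of Douglas--Rudin produces an a.e.\ (rather than essentially uniform) bound, one must be careful to take the union of the finitely many null sets arising from the $m$ scalar approximations at each stage, and then the countable union over $k$, all of which remains null. Once this is observed, the construction and the holomorphy/unitarity verifications go through in verbatim analogy with the $\T$ case.
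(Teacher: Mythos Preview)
Your approach is essentially the paper's: spectral theorem, reduction to a simple spectral function $\widetilde\eta$, componentwise scalar Douglas--Rudin on $\mathbb{S}_d$, and assembly into multiplication-operator-valued two-sided inner functions. One point of caution: you invoke Rudin's ball result in the form ``for each $\epsilon>0$ there exist inner $\varphi,\psi$ with $|\varphi/\psi-u|<\epsilon$ a.e.,'' whereas what Theorem~5.5 of \cite{RudinBall} actually supplies (and what the paper uses) is a \emph{sequence} $\theta_k\to u$ a.e.; the former does not follow from the latter in general. The paper sidesteps this by working directly with the sequences $\theta_{k,i}\to\eta_i$ a.e.\ for a fixed simple $\eta$ and bounding $\|f-\Phi_k\Psi_k^{-1}\|_{op}\le\max_i|\theta_{k,i}-\eta_i|\to 0$ a.e. For general $T$ (non-simple $\eta$), your diagonal extraction over $\epsilon_k=1/k$ is the right idea, but to make it rigorous without the $\epsilon$-a.e.\ hypothesis you should pass through convergence in measure and a Borel--Cantelli selection of indices; this is routine, and modulo that adjustment your argument is sound and in fact more explicit about null-set bookkeeping than the paper's sketch.
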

\begin{proof}
The proof closely follows the proof of Proposition~\ref{prop:Key_Proposition2}, we only sketch the proof below highlighting the significant departures.

Consider a functions $f:\mathbb{S}_{d}\to \Bcal(L^2(\Sigma, \mu))$ such that $f\vert_{E}=I$ and $f\vert_{E^c}=M_{\eta}$ where $$\eta = \sum_{i=1}^{m}\alpha_i\chi_{\Sigma_i}.$$ Define 
\begin{equation*}
		\eta_i(z)=
		\begin{cases}
			1 & \text{if }\  z\in E; \\
			\alpha_i & \text{ if }\  z\in E^c
		\end{cases}
	\end{equation*}

 Invoke Theorem $5.5$ of \cite{RudinBall} to get two sequences of inner functions $\varphi_{k, i}, \psi_{k, i}$ such that $$\theta_{k, i}(z)= \varphi_{k, i}(z)\psi_{k, i}^{-1}(z)\to \eta_i(z)$$ as $k\to \infty$ for a.e. $z\in \mathbb{S}_d$. Define $$\Phi_k(z) \coloneqq M_{\varphi_k(z)}\quad \text{ and}  \quad \Psi_k(z) \coloneqq M_{\psi_k(z)},$$ where $$\varphi_k(z)= \sum_{i=1}^{m}\varphi_{k, i}(z)\chi_{\Sigma_i}\quad \text{ and } \quad \psi_k(z)=\sum_{i=1}^{m}\psi_{k, i}(z)\chi_{\Sigma_i}.$$ Then, $\Phi_k$ and $\Psi_k$ are two-sided inner functions. 
Now, we observe that 

$$\|f(z)-\Phi_k(z)\Psi_k(z)^{-1}\|_{op}\leq \max_{i=1}^{m}|\theta_{k, i}(z)-\eta_i(z)|\to 0$$ 

as $k\to \infty$ 
for almost everywhere $z\in \mathbb{S}_d$.
\end{proof}

\section{Discussion}\label{sec:conclusion}

It is reasonable to inquire whether there is any approximation result without the imposition of any unimodular condition, specifically for arbitrary bounded measurable functions. The following proposition addresses this question, providing approximation results applicable to any such function. However, this comes at the cost of one of the involved factors. Unlike Theorem~\ref{Thm:Main_Theorem}, where the factors consist of single two-sided inner functions, one of the current factors now comprises linear combinations of two-sided inner functions.
\begin{proposition}
Let $f$ be a function in $L^\infty(\mathbb{S}_d, \Bcal(\mathcal H))$. Then the following hold: 

\begin{enumerate}
    \item if $d=1$, then $f$ can be approximated uniformly, in weak topology, by the functions of the form $\psi{\varphi}^*$, where $\psi$ is a finite linear combination of two-sided inner functions and $\varphi$ is two-sided inner;
    \item if $d>1$, then the approximation above holds in a.e. (almost everywhere) sense.
\end{enumerate}
 \end{proposition}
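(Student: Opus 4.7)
The plan is to reduce the operator-valued statement to the scalar Douglas-Rudin theorem (respectively \cite{RudinBall}*{Theorem~5.5} when $d>1$), sidestepping the noncommutativity obstruction by peeling off the $z$-dependence into scalar functions and leaving all operator-valued information in constant unitaries before any inner-function approximation is invoked.

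After normalizing to $\|f\|_\infty\le 1$, apply Lemma~\ref{lem:ApproximationBySimpleUnimod} with $Y$ the closed unit ball of $\Bcal(\Hcal)$ in a metric inducing the weak topology (weakly compact by Banach-Alaoglu) to approximate $f$ uniformly in the weak topology by a simple function $g=\sum_{k}A_{k}\chi_{E_{k}}$ with $\|A_k\|\le 1$. For each $A_k$, write $A_{k}=H_{1,k}+iH_{2,k}$ with $H_{j,k}$ self-adjoint contractions and set $V_{j,k}^{\pm}=H_{j,k}\pm i\sqrt{I-H_{j,k}^{2}}$; these are unitaries satisfying $H_{j,k}=\tfrac{1}{2}(V_{j,k}^{+}+V_{j,k}^{-})$, and hence
\[
A_{k} \;=\; \sum_{l=1}^{4}c_{l}\,B_{k,l}
\]
with fixed constants $c_l\in\{\pm\tfrac{1}{2},\pm\tfrac{i}{2}\}$ (independent of $k$) and unitaries $B_{k,l}\in\Ucal(\Hcal)$. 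Substituting gives the exact identity
\[
g(z) \;=\; \sum_{k,l}h_{k,l}(z)\,B_{k,l}, \qquad h_{k,l}(z):=c_{l}\chi_{E_{k}}(z),
\]
where each $h_{k,l}$ is a scalar function bounded by $\tfrac{1}{2}$.

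Now apply the scalar version of the statement to each $h_{k,l}$. Writing $h_{k,l}=\tfrac{1}{2}(u^{+}+u^{-})$ with $u^{\pm}=h_{k,l}\pm i\sqrt{1-|h_{k,l}|^{2}}$ unimodular, approximating each $u^{\pm}$ by a scalar inner quotient via Theorem~\ref{thm:DR} (uniformly, for $d=1$) or \cite{RudinBall}*{Theorem~5.5} (a.e., for $d>1$), and combining over a common scalar denominator via scalar commutativity yields $h_{k,l}\approx\pi_{k,l}\,\overline{\sigma_{k,l}}$ with $\pi_{k,l}$ a finite linear combination of scalar inner functions and $\sigma_{k,l}$ scalar inner. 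Set $\sigma(z)=\prod_{k,l}\sigma_{k,l}(z)$, itself scalar inner; using $\overline{\sigma_{k,l}}=\overline{\sigma}\prod_{(k',l')\ne(k,l)}\sigma_{k',l'}$ a.e.\ on $\mathbb{S}_d$ gives
\[
g(z) \;\approx\; \Big[\,\sum_{k,l}\pi_{k,l}(z)\prod_{(k',l')\ne(k,l)}\sigma_{k',l'}(z)\,B_{k,l}\,\Big]\,\overline{\sigma(z)}.
\]
Expanding each $\pi_{k,l}=\sum_m d_m\theta_m$ into scalar inner functions and distributing, the bracket becomes a finite linear combination of terms of the form $\theta(z)B$ with $\theta$ a scalar inner and $B\in\Ucal(\Hcal)$ constant. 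Each such $\theta(z)B$ is holomorphic on $\mathbb{B}_d$ and unitary-valued a.e.\ on $\mathbb{S}_d$, hence a two-sided inner $\Bcal(\Hcal)$-valued function; consequently the bracketed sum $\Psi(z)$ is a finite linear combination of two-sided inner functions, and $\Phi(z)=\sigma(z)I_{\Hcal}$ is two-sided inner, so $f\approx\Psi\Phi^{*}$ in the appropriate sense (uniformly in the weak topology for $d=1$; a.e.\ for $d>1$).

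The main obstacle the plan circumvents is the failure, in the operator setting, of the scalar ``common right denominator'' trick: a direct application of Theorem~\ref{Thm:Main_Theorem} to a Russo-Dye-type decomposition $f=\tfrac{1}{2}(U_{1}+U_{2})$ only produces $f\approx\tfrac{1}{2}(\Phi_{1}\Psi_{1}^{-1}+\Phi_{2}\Psi_{2}^{-1})$, and the $\Psi_{j}$, arising from different spectral representations of $U_{1},U_{2}$, do not commute, so the two terms cannot be put over a single right factor $\varphi$. The route above sidesteps this by forcing all noncommutative data into the \emph{constant} unitaries $B_{k,l}$, after which the scalar common-denominator step goes through verbatim.
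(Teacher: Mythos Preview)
Your argument is correct in outline, with one small slip: the formula $u^{\pm}=h\pm i\sqrt{1-|h|^{2}}$ yields unimodular functions only when $h$ is real-valued, which fails for $c_{l}=\pm i/2$; the fix is immediate (e.g.\ write $\chi_{E_{k}}=\tfrac{1}{2}\bigl((2\chi_{E_{k}}-1)+1\bigr)$ as an average of real unimodular functions first and absorb $c_{l}$ into the linear-combination coefficients).

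Your route differs substantially from the paper's. The paper deduces the proposition from the operator-valued Douglas--Rudin theorems already established: it notes that $(2\chi_{E}-1)U$ is unimodular and hence approximable by a quotient of two-sided inner functions, concludes $\chi_{E}U\in\overline{\mathcal Q}$, passes to $\chi_{E}T$ for arbitrary contractions $T$ via the weak density of $\mathcal U(\mathcal H)$ in the unit ball, and finishes by a simple-function argument together with the assertion that $\overline{\mathcal Q}$ is a linear space. You instead bypass Theorems~\ref{Thm:Main_Theorem} and~\ref{Thm:MainTheorem_Ball2} entirely and reduce directly to the \emph{scalar} Douglas--Rudin theorem: by freezing all operator data in the constant unitaries $B_{k,l}$ and carrying the $z$-dependence only in scalar coefficients $h_{k,l}$, your common denominator $\sigma=\prod_{k,l}\sigma_{k,l}$ is scalar, so $\varphi=\sigma I_{\mathcal H}$ is central and the single-right-factor step is automatic. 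This yields a self-contained proof that makes explicit why the noncommutative common-denominator obstruction you identify never arises---indeed it even shows that $\varphi$ may always be taken of the form $\sigma I_{\mathcal H}$---at the cost of a slightly longer reduction; the paper's argument is shorter but leans on the main theorems and on the linearity of $\overline{\mathcal Q}$, a point your construction renders transparent.
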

\iffalse
\begin{proof}
 The proof is a consequence of our main Theorems. Indeed, let $\mathcal{Q}$ be the set of all functions of the form $\psi\varphi^*$, where $\psi$ is a finite linear combination of two-sided inner functions and $\varphi$ is a two-sided inner function. First, assume that $d=1$. Let $\chi_{E}$ be the characteristic function of a measurable set $E \subset \mathbb{T}$. Note that $(2\chi_E - 1)I_{\Hcal}$ is  unimodular and hence is in $\overline{\mathcal{Q}}$ by Theorem~\ref{Thm:Main_Theorem}. Since $\overline{\mathcal{Q}}$ is a linear space, it follows that $\chi_EI_{\Hcal}$ is in $\overline{\mathcal{Q}}.$ Therefore, $L^\infty(\mathbb{T}, \Bcal(\mathcal H))\subset \overline{\mathcal Q}$. It is obvious that $$\overline{\mathcal Q}\subset L^\infty(\mathbb{T}, \Bcal(\mathcal H)).$$ Hence
 
 $$\overline{\mathcal{Q}} = L^\infty(\mathbb{T}, \Bcal(\mathcal H)).$$ This completes the proof of part $(1)$. 
 The proof of part $(2)$ follows along the same line where instead of Theorem \ref{Thm:Main_Theorem}, we apply Theorem \ref{Thm:MainTheorem_Ball2}. We leave the details to the interested readers.
\end{proof}
\fi
\begin{proof}
 The proof is a consequence of our main Theorems. Indeed, let $\mathcal{Q}$ be the set of all functions of the form $\psi\varphi^*$, where $\psi$ is a finite linear combination of two-sided inner functions and $\varphi$ is a two-sided inner function. First, assume that $d=1$. Let $\chi_{E}$ be the characteristic function of a measurable set $E \subset \mathbb{T}$. And, let $U$ be a unitary operator. Note that $(2\chi_E - 1)U$ is  unimodular and hence is in $\overline{\mathcal{Q}}$ by Theorem~\ref{Thm:Main_Theorem}. Since $\overline{\mathcal{Q}}$ is a linear space, it follows that $\chi_EU$ is in $\overline{\mathcal{Q}}$. Since the weak-closure of $\Ucal(\Hcal)$ in $\Bcal(\Hcal)$ is the set $$\{T\in \Bcal(\Hcal): \norm{op}{T}\leq 1\}$$ and $\overline{\mathcal{Q}}$ is a linear space, we conclude that $\chi_{E}T\in \overline{\mathcal{Q}}$ for every $T\in \Bcal(\Hcal)$. It now follows from a standard approximation argument that \[L^\infty(\mathbb{T}, \Bcal(\mathcal H))\subset \overline{\mathcal Q}\;.\] 
 On the other hand, it is obvious that $\overline{\mathcal Q}\subset L^\infty(\mathbb{T}, \Bcal(\mathcal H))$. Thus, we conclude that
 $$\overline{\mathcal{Q}} = L^\infty(\mathbb{T}, \Bcal(\mathcal H)).$$
 
 This completes the proof of part $(1)$. The proof of part $(2)$ follows along the same line where instead of Theorem \ref{Thm:Main_Theorem}, we apply Theorem \ref{Thm:MainTheorem_Ball2}. We leave the details to the interested readers.
\end{proof}

Suppose we have a scalar-valued function $f \in L^\infty$, represented as $f = \overline{h}g$, where $h$ and $g$ are non-zero functions in $H^\infty(\mathbb{D})$. It is straightforward to observe that $f$ is log-integrable, meaning it is non-zero almost everywhere, and $\log |f|$ is integrable ($\log |f| \in L^1$). Douglas and Rudin \cite{RudinDouglas} asked if the converse holds. Initially, they conjectured that it does not. However, Bourgain~\cite{Bo} demonstrated that the log-integrability condition is indeed sufficient. A decade ago, this result was extended to matrix-valued functions by Barclay~\cite{BS}. Both proofs, whether for scalar-valued or matrix-valued functions, crucially relied on the Douglas-Rudin approximation results.

It is worth noting that this factorization result is not completely settled in the operator-valued setting. Recent investigations have explored various factorization results concerning operator-valued inner functions~\cites{curto2022operator, Curto2}, yet a comprehensive understanding remains elusive. We believe that our generalization of the Douglas-Rudin approximation theorem holds promise in resolving Douglas-Rudin type factorization results for the operator-valued setting.

%Additionally, Axler's \cite{Axler} utilization of the Douglas-Rudin theorem to provide factorization for arbitrary bounded measurable functions underscores the theorem's versatility. One of the factors is $H^{\infty}+ C(\mathbb{T})$, and the other is characterized as rational inner functions. We believe our result may be helpful in achieving similar outcomes for the operator-valued functions.

We end with a natural remark about a possible extension of our results. It is natural to wonder if the Douglas-Rudin theorem holds for (operator-valued) unimodular functions defined on $\T^{d}$. Clearly, our proof technique would yield the result, if the Douglas-Rudin theorem for scalar-valued unimodular functions were available for $\T^{d}$. However, to the best of our knowledge, the result available in this direction is only available for the continuous unimodular functions~\cite{Mc}. It would be interesting to explore the possibility of extending the result in~\cite{Mc} to measurable unimodular function--which would then yield the result in operator-valued case as well following our technique.

\bibliographystyle{alpha} % can use amsalpha but repeated authors are dashed
\bibliography{references}
%%%%%%%%%%%%

\end{document}